\documentclass[11pt]{amsart}
\usepackage[margin=25mm]{geometry}
\usepackage{amssymb,amsmath,amsthm,mathtools}
\usepackage{enumitem,microtype,xcolor}
\definecolor{darkblue}{rgb}{0,0,0.6}
\usepackage[ocgcolorlinks]{hyperref}
\usepackage[capitalize]{cleveref}
\hypersetup{colorlinks=true,citecolor=darkblue,linkcolor=darkblue,urlcolor=darkblue,breaklinks,pdftitle={The relation gap problem and Wall's D2 problem for virtually free groups},pdfauthor={John Nicholson}}

\newcommand{\Z}{\mathbb Z}

\newcommand{\C}{\mathbb C}
\newcommand{\Dtwo}{\textnormal{D2}}

\newcommand{\ab}{\mathrm{ab}}

\DeclareMathOperator{\deficiency}{def}
\newcommand{\ncl}[1]{\langle\!\langle #1\rangle\!\rangle}
\DeclareMathOperator{\im}{im}

\newtheorem{theorem}{Theorem}

\newtheorem{corollary}[theorem]{Corollary}
\theoremstyle{remark}
\newtheorem*{remark}{Remark}

\title{Two-generator presentations with a relation gap}

\author{Laurent Bartholdi}
\address{Institut Camille Jordan, Universit\'{e} de Lyon 1 \textnormal{\itshape and} Section de Math\'{e}matiques, Universit\'{e} de Gen\`{e}ve}
\email{laurent.bartholdi@gmail.com}
\author{Martin R. Bridson}
\address{Mathematical Institute, University of Oxford, Andrew Wiles Building, Oxford OX2 6GG, UK}
\email{bridson@maths.ox.ac.uk}
\author{Roman Mikhailov}
\address{Saint Petersburg State University, 7/9 Universitetskaya nab., St. Petersburg, 199034, Russia}
\author{John Nicholson}
\address{Max Planck Institute for Mathematics, Vivatsgasse 7, 53111 Bonn, Germany}
\email{math@johnnynicholson.co.uk}

\subjclass[2020]{Primary 20F05; Secondary 57K20, 57M05.}

\begin{document}
\begin{abstract}
We exhibit two-generator two-relator group presentations that have cyclic relation modules but do not present one-relator groups; this resolves the relation gap problem.  
By attaching a single $3$-cell to each of the corresponding presentation 2-complexes, one obtains counterexamples to Wall's D2 problem.
\end{abstract}
\maketitle

Let $F$ be a finitely generated free group, suppose that $R\unlhd F$ is the normal closure of a finite set, and let $G=F/R$. The \emph{relation module} $R_{\ab}=R/[R,R]$ is a left $\Z G$-module under conjugation. Write $d_F(R)$ for the minimum number of normal generators of $R$ in $F$, and $d_{\Z G}(R_{\ab})$ for the minimum number of module generators. The \emph{relation gap problem} asks whether there exists an example with
\[
d_F(R)>d_{\Z G}(R_{\ab}).
\]
This problem originated in the work of K.W. Gruenberg \cite{Gruenberg76} and appeared as Problem B15 in \cite{Wa79}.
The case $d_{\Z G}(R_{\ab}) = 1$ has attracted particular interest: one wants to know if a group with cyclic relation module is a one-relator group \cite[Problem OR5]{BMS99}. For background, see \cite{Harlander18}.

The following theorem settles the relation gap problem. Our convention is $[u,v]:=uvu^{-1}v^{-1}$.

\begin{theorem}\label{thm:gap}
Let $p, q \ge 2$ be coprime integers, let $F=F(x,t)$ and let $R$ be the normal closure of $x^{pq}$ and $[x^p,tx^qt^{-1}]$. Then
\[
G=F/R=\langle x,t\mid x^{pq},[x^p,tx^qt^{-1}]\rangle
\]
is not a one-relator group, but $R_{\ab}$ is cyclic. Therefore, $d_F(R)=2$ and $d_{\Z G}(R_{\ab})=1$.
\end{theorem}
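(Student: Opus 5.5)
There are two independent tasks: show that $R_{\ab}$ is cyclic, and show that $G$ is not a one-relator group. Together with the trivial bound $d_F(R)\le 2$, they give the final sentence of the theorem. If $d_F(R)=1$ then $G$ would be a one-relator group, so $d_F(R)=2$. Since $G$ is not free (it has torsion, see below), $R_{\ab}\neq 0$, so $d_{\Z G}(R_{\ab})=1$.

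\emph{Cyclicity of $R_{\ab}$.} Write $r=x^{pq}$ and $s=[x^p,tx^qt^{-1}]$, with images $\bar r,\bar s\in R_{\ab}$. I will show that $\bar r\in \Z G\,\bar s$, so that $\bar s$ generates. The idea is a Fox-calculus or "commutator trick" computation.
\begin{itemize}[nosep]
\item In $G$, the elements $a=x^p$ and $b=tx^qt^{-1}$ commute.
\item Since $a$ and $b$ have orders dividing $q$ and $p$, the subgroup $\langle a,b\rangle$ is a quotient of $\Z/q\times\Z/p\cong \Z/pq$.
\item Choose $\alpha,\beta$ with $\alpha p+\beta q=1$. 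Then $x=x^{\alpha p}x^{\beta q}=a^\alpha x^{\beta q}$.
\end{itemize}
Concretely, I will work in $F$ modulo $[R,R]$ and express $r=x^{pq}$ as a product of conjugates of $s^{\pm1}$ and of $r$ itself. The coefficient on $r$ should be of the form $1-g$ with $g$ such that $(1-g)$ is invertible, or cancels, in the relevant quotient.

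A cleaner route is to use the relation module sequence $0\to R_{\ab}\to \Z G^2\to \Z G\to\Z\to 0$. Here $R_{\ab}$ is the submodule generated by the Fox derivative vectors $\partial r$ and $\partial s$. One computes $\partial r=(N_{x}^{pq},0)$, where $N$ denotes the norm element $1+x+\dots+x^{pq-1}$, and similarly expands $\partial s$. Using the identity $\sum$ (coprimality) $\Rightarrow$ $N_{x^{pq}}$ lies in the ideal generated by $N_{x^p}$ and $N_{x^q}$, one exhibits $\lambda\in\Z G$ with $\lambda\,\partial s=\partial r$. Because the map $R_{\ab}\to\Z G^2$ is injective, this gives $\bar r=\lambda\bar s$.

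I expect finding $\lambda$ explicitly to be fiddly but elementary. I would first try to guess it from the case $p=2$, $q=3$, writing $1=\alpha p+\beta q$ and telescoping $1-x^{pq}$-type factors.

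\emph{$G$ is not one-relator.} This is the main obstacle. My plan is to use torsion. In $G$, the element $x$ has order exactly $pq$; this needs checking, for instance via the map to $\Z/pq$ given by $x\mapsto 1$, $t\mapsto 0$. By the Karrass–Magnus–Solitar theorem, a one-relator group with torsion has the form $\langle X\mid w^n\rangle$ with $w$ not a proper power. In such a group every finite subgroup is cyclic and conjugate into $\langle w\rangle$, and the centralizer of every nontrivial torsion element is finite cyclic.

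In $G$, the elements $x^p$ and $tx^qt^{-1}$ commute. I would show that $x^p$ and $tx^qt^{-1}$ generate a cyclic subgroup of order $pq$ that is not conjugate into $\langle x\rangle$, or else produce an infinite centralizer, or two nonconjugate maximal finite cyclic subgroups. Either outcome contradicts the one-relator structure, since all maximal finite subgroups there are conjugate to $\langle w\rangle$.

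To establish these facts about $G$, I would decompose $G$ as a graph of groups, i.e.\ as an HNN extension. Let $H=\langle x,y\mid x^{pq},y^{pq},[x^p,y^q]\rangle$, an amalgam of $\Z/pq$ and $\Z/pq$ along the commuting pair. More precisely, $H$ is the amalgam
\[
\langle x\rangle *_{\langle x^p\rangle}(\langle x^p\rangle\times\langle y^q\rangle)*_{\langle y^q\rangle}\langle y\rangle .
\]
Then $G$ is the HNN extension of $H$ with stable letter $t$ conjugating $x$ to $y$. Normal forms in this graph of groups give the following.
\begin{itemize}[nosep]
\item The vertex group $\langle x^p\rangle\times\langle y^q\rangle$ is cyclic of order $pq$.
\item The element $x^py^q$ is not conjugate into $\langle x\rangle$.
\end{itemize}
Hence $G$ has two nonconjugate maximal finite cyclic subgroups, namely $\langle x\rangle$ and $\langle x^py^q\rangle$. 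This contradicts Karrass–Magnus–Solitar, so $G$ is not a one-relator group.
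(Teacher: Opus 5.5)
\textbf{Gap: the cyclicity of $R_{\ab}$.} You set out to show $\bar r\in\Z G\,\bar s$, that is, $\lambda\,\partial s=\partial r$, or in your variant $\bar r=(1-g)\bar r+\lambda\bar s$. This is false, so $\bar s$ is not a generator.

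To see this, compose the embedding $R_{\ab}\hookrightarrow\Z G^2$ with the augmentation $\epsilon\colon\Z G^2\to\Z^2$. This composite is $\Z G$-linear for the trivial action on $\Z^2$, and it sends the class of a relator to its image in $F_{\ab}$.
\begin{itemize}
\item Since $s$ is a commutator, $\epsilon(\partial s)=\epsilon\bigl(u,(a-1)(1-b)\bigr)=(0,0)$. Hence $\epsilon(\lambda\,\partial s)=0$ for every $\lambda\in\Z G$.
\item On the other hand, $\epsilon(\partial r)=\epsilon(N_x,0)=(pq,0)\neq 0$.
\item Your variant fails the same way, because $\epsilon(1-g)=0$.
\end{itemize}
So no manipulation using $\alpha p+\beta q=1$ can succeed. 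Any generator must have the form $\mu\bar r+\nu\bar s$ with $\epsilon(\mu)=\pm1$.

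\textbf{What the paper does instead.} Its generator has exactly that form: $v=N_x-\Delta(1+\ell N_b)u$, which for $p=3$, $q=2$ is simply $v=N_x+(b+b^2)u$. The substance of the proof is showing $N_x\in\Z G\,v$, and this is the step your proposal defers as ``fiddly but elementary''. It rests on three identities, where $N_a=\sum_{i<q}a^i$ and $N_b=\sum_{j<p}b^j$:
\[
N_aN_x=qN_x,\qquad N_au=(1-b)N_x,\qquad N_bu=(a-1)tN_x .
\]
These follow from $N_a(a-1)=N_b(1-b)=0$, $N_a\sum_{i<p}x^i=N_x$ and $N_bt=t\sum_{j<p}x^{qj}$. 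One then combines $N_av$ and $N_bv$ to isolate $N_x$.

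\textbf{The non-one-relator half is sound, by a different route.} The paper notes that $x^q$ and $b=tx^qt^{-1}$ both centralise $a=x^p$ but do not commute in $G/\ncl{x^p}\cong C_p*\Z$. So $C_G(a)$ is nonabelian, contradicting Newman's cyclic-centraliser theorem. You instead use Karrass--Magnus--Solitar together with your graph-of-groups splitting. That works, provided you spell out the Bass--Serre step: the order-$pq$ group $\langle x^p,y^q\rangle$ fixes a vertex all of whose incident edges have stabilisers of order $p$ or $q$. Hence it cannot also fix a vertex in the orbit of the $\langle x\rangle$-vertex, so it is not conjugate to $\langle x\rangle$.
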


\begin{proof}
Let $a=x^p$ and $b=tx^qt^{-1}$, which satisfy $a^q=b^p=1$ and $ab=ba$. The epimorphism $\phi\colon G 
\twoheadrightarrow C_{pq} =\langle c \rangle$ sending $x \mapsto c$, $t \mapsto 1$ shows that $a$ has order $q$.
Both $x^q$ and $b$ centralise $a$, but their images do not commute in
$
G/\ncl{x^p}\cong C_p*\Z,
$
by the normal form for a free product. Hence $C_G(a)$ is nonabelian. Newman's theorem \cite[\S3]{Newman73} states that every nonidentity element of a one-relator group with torsion has cyclic centraliser. Thus $G$ is not one-relator, and $d_F(R)=2$.

To see that the relation module is cyclic, we work in the universal cover of the presentation 2-complex $Y$.  The 1-skeleton of $\widetilde{Y}$ is the Cayley graph
${\rm{Cay}}(G, \{x,t\})$ and the first homology of this graph, with the action of $G=\pi_1Y$, is the relation module. But this homology is the image
of the second boundary map $\partial_2\colon \Z G^2 \to \Z G^2$ of $\widetilde{Y}$. Let $N_x=1+x+\cdots+x^{pq-1}$.
With rows indexed by the relators and columns indexed by $x,t$, the boundary map is given by the Fox matrix \cite{Fox53}:
\begin{equation}\label{eq:fox}
\partial_2 = \begin{pmatrix}N_x&0\\ u&(a-1)(1-b)\end{pmatrix},
\quad \text{where} \,\, u=(1-b)\sum_{i=0}^{p-1}x^i+(a-1)t\sum_{i=0}^{q-1}x^i.
\end{equation} 
Each $(r,s)\in\im \partial_2$ satisfies $r(x-1)+s(t-1)=0$. 
The epimorphism $G\twoheadrightarrow\Z$ that kills $x$ shows that $t$ has infinite order in $G$.
Thus right multiplication by $t-1$ is injective on $\Z G$: an element fixed by right multiplication by $t$ has constant coefficients on the infinite orbits, and hence cannot have finite support unless it is zero. Projection onto the first coordinate therefore gives us an identification
\begin{equation}\label{eq:ideal}
R_{\ab}\cong I:=\Z G \cdot N_x+\Z G \cdot u.
\end{equation}
It now suffices to prove that the module $I$ is generated by a single element.

To clarify the exposition, we restrict for a moment to the case $p=3$ and $q=2$. Set $N_a=1+a$ and $N_b=1+b+b^2$. We have
\begin{equation}\label{eq:identities}
N_aN_x=2N_x,\qquad N_au=(1-b)N_x,\qquad N_bu=(a-1)tN_x, 
\end{equation}
 because $N_a(a-1)=N_b(1-b)=0$, $N_a(1+x+x^2)=N_x$ and $N_bt=t(1+x^2+x^4)$. The generator of $I$ that we seek is
\begin{equation}\label{eq:generator}
v:=N_x+(b+b^2)u=\bigl(1+(a-1)t\bigr)N_x-u.
\end{equation}
The two expressions for $v$ agree because $N_b-1=b+b^2$ and $N_bu=(a-1)tN_x$. The second expression tells us that
$I=\Z G \cdot N_x+\Z G \cdot v$. To see that $N_x\in\Z G \cdot v$, we use the identities
\[
N_av=(1+b)N_x,\qquad N_bv=N_bN_x+2(a-1)tN_x.
\]
Since $b^3=1$, we obtain
\[
(1-b+b^2)N_av=2N_x,\qquad (N_b-bN_a)v=N_x+2(a-1)tN_x.
\]
Multiplying the first equality by  $(a-1)t$ and subtracting from the second gives
\[
N_x=\bigl[N_b-bN_a-(a-1)t(1-b+b^2)N_a\bigr]v.
\]
Thus $I=\Z G \cdot v$, as claimed. 

The general case $p, q \ge 2$ is similar but notationally challenging. The generator is
$v = N_x - \Delta(1+\ell N_b)u$,
where $N_b = \sum_{i=0}^{p-1} b^i$, $\Delta = \sum_{i,j \ge 0, i+j \le q-2} a^ib^j$, and $\ell\in\mathbb{N}$ with $\ell p \equiv -1$ mod $q$.
\end{proof}

The examples in Theorem \ref{thm:gap} are related to
OpenAI's proposed counterexample to Wall's D2 problem \cite{OpenAI26}. However, our proof is both elementary and completely independent of that work. 

\begin{remark} (i) The epimorphism $\phi\colon G\twoheadrightarrow C_{pq} = \langle c \rangle$ sending $x \mapsto c$, $t \mapsto 1$ can be split by $c \mapsto x$.
It can be shown using Reidemeister–Schreier rewriting that $\ker(\phi) \cong F_{p+q-1}$. Hence $G\cong F_{p+q-1}\rtimes C_{pq}$.

(ii) For $p=3$ and $q=2$, let $H = \langle x, t \mid x^{6}[a, b^3][a,b]^{-1}\rangle$ where $a = x^3$ and $b = tx^2t^{-1}$. It can be shown that $G$ is the maximal residually solvable quotient of $H$. Since $G$ is not a one-relator group, this gives a counterexample to a recent conjecture of Linton \cite[Conjecture 4.8]{Li26}.
\end{remark}

A finite connected CW complex $X$ of dimension at most three is called a \emph{D2 complex} if $H_3(\widetilde X;\Z)=0$ and $H^3(X;M)=0$ for every left $\Z\pi_1(X)$-module $M$. This condition is equivalent to the cellular boundary map $\partial_3:C_3(\widetilde X)\to C_2(\widetilde X)$ being split injective. The \emph{D2 problem} of C.T.C.~Wall asks whether every such complex is homotopy equivalent to a finite $2$-complex. This dates back to Wall's 1965 paper on finiteness conditions \cite[\S2]{Wall65} and appeared as Problem D3 in \cite{Wa79}.

Unlike Theorem \ref{thm:gap}, the following result relies on a recent claim from OpenAI, specifically  
\cite[Corollary 2.3]{OpenAI26}, and the reader can regard Corollary~\ref{thm:d2} as conditional upon this statement.
Our construction of a D2 complex follows an idea of M.N.~Dyer described in \cite[Theorem 3.5]{Ha00}.

\begin{corollary}\label{thm:d2}
Let $G=F/R$ be as in Theorem \ref{thm:gap}, with presentation $2$-complex $Y$.
Then one can attach a single $3$-cell to $Y$ to obtain a finite \Dtwo{} complex $X$ with $\pi_1(X)=G$ and $\chi(X)=0$. 
In contrast, every finite connected $2$-complex $Z$ with $\pi_1(Z) = G$ has $\chi(Z) \ge 1$. Therefore, $X$ is not homotopy equivalent to a finite $2$-complex.
\end{corollary}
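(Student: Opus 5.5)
We follow Dyer's idea as in \cite[Theorem 3.5]{Ha00}: use the cyclicity of $R_{\ab}$ from Theorem~\ref{thm:gap} to build a complex with one fewer $2$-cell than $Y$ in the ``stable'' sense, then compare Euler characteristics.

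\emph{Construction.} The cellular chain complex of $\widetilde Y$ is
\[
0\to \pi_2(Y)\to \Z G^2\xrightarrow{\partial_2}\Z G^2\xrightarrow{\partial_1}\Z G\to\Z\to 0,
\]
with $\im\partial_2\cong R_{\ab}$ and $\ker\partial_2=H_2(\widetilde Y)=\pi_2(Y)$. By Theorem~\ref{thm:gap}, $R_{\ab}\cong I=\Z G\cdot v$ is cyclic. Choose $w\in \Z G^2$ with $\partial_2(w)$ mapping to $v$ under the identification \eqref{eq:ideal}. Then $\Z G\to R_{\ab}$, $1\mapsto \partial_2 w$, is onto, and so
\[
\Z G^2=\Z G\, w+\ker\partial_2 .
\]
Let $e$ be the second basis vector. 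Write $e=\lambda w+\sigma$ with $\sigma\in\ker\partial_2=\pi_2(Y)$; similarly for the first basis vector. Since $\ker\partial_2=\pi_2(Y)$ by Hurewicz, any $\sigma\in\ker\partial_2$ is represented by a map $S^2\to Y$. The plan is to find $\sigma\in\pi_2(Y)$ such that $\Z G\cdot\sigma$ is a free summand complementary to a rank-one free module mapping isomorphically onto $\im\partial_2$. Concretely, we need a splitting $\Z G^2\cong \Z G\oplus \Z G\sigma$ with $\partial_2|_{\Z G\sigma}=0$, hence $\pi_2(Y)\supseteq\Z G\sigma$ free. Attaching a $3$-cell along $\sigma$ gives $X$ with $\partial_3\colon\Z G\to\Z G^2$, $1\mapsto\sigma$, split injective. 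This makes $X$ a D2 complex with $\chi(X)=1-2+2-1=0$. Existence of such a $\sigma$ amounts to the statement that $\im\partial_2$, although cyclic, is \emph{free}, or more precisely that the surjection $\Z G^2\to R_{\ab}$ has kernel containing a free direct summand of rank one. I would deduce this as follows. Since $G$ is virtually free (Remark (i)), $R_{\ab}$ is projective, and Schanuel's lemma gives $\ker\partial_2\oplus\Z G\cong\Z G^2\oplus\ker(\Z G\to R_{\ab})$. With $\ker(\Z G\to R_{\ab})$ controlled, this yields the desired free summand. This input is exactly where the cited \cite[Corollary 2.3]{OpenAI26} would be used, presumably to assert that $I\cong\Z G$, i.e.\ that $v$ is not a zero divisor. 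Then $\Z G^2\cong\Z G w\oplus\ker\partial_2$ with $\ker\partial_2$ stably free of rank one, and after cancellation, which also needs justification, free.

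\emph{Lower bound.} For any finite $2$-complex $Z$ with $\pi_1Z=G$, compute with rational coefficients or via the $\ell^2$/Euler characteristic of the virtually free group. The rational Euler characteristic is $\chi(G)=\chi(F_{p+q-1})/pq=(2-p-q)/pq$. A $2$-complex $Z$ has chain complex of free modules of ranks $1,n_1,n_2$, with $\pi_2(Z)$ projective over $\Z G$ since $\mathrm{cd}_\mathbb{Q}$ is $1$. Restricting to the free subgroup $K$ of index $pq$ and using that $\pi_2$ is a projective $\Z K$-module gives $pq\,\chi(Z)=\chi(K)+\operatorname{rk}\pi_2\ge 2-p-q$. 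That alone only gives $\chi(Z)>-1$, i.e.\ $\chi(Z)\ge0$. To exclude $\chi(Z)=0$, observe that $\chi(Z)=0$ with one $0$-cell forces a deficiency-one-type chain complex. By a standard Tietze/Schanuel argument, $\chi(Z)\ge 1+\min(\text{cells})$ reduces to showing that $G$ has no presentation of deficiency $0$ whose $\pi_2$ is as small as possible. Equivalently, $\chi(Z)=0$ would force $\pi_2(Z)$ to be the minimal projective; the cited corollary presumably excludes exactly this, or gives $\chi_{\min}(G)=1$. I expect this step, proving $\chi(Z)\ge1$, to be the main obstacle, and would rely on \cite[Corollary 2.3]{OpenAI26} for it. Finally, homotopy equivalent finite complexes have equal $\chi$, so $X\not\simeq Z$.
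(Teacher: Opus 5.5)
There are two genuine gaps, one in each half of your argument.

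\emph{First, the construction.} Your plan depends on $R_{\ab}$ being projective, so that $I\cong\Z G$. This is false. The sequence $0\to R_{\ab}\to\Z G^2\to\Z G\to\Z\to0$ is exact, so projectivity of $R_{\ab}$ would force $\mathrm{cd}_{\Z}G\le2$. That is impossible, since $x$ has order $pq$. Virtual freeness gives projectivity only over $\mathbb{Q}G$, or after restricting to the free subgroup $K$. Consequently $v$ has nonzero left annihilator $\mathrm{Ann}(v)$, and the splitting $\Z G^2\cong\Z G\,w\oplus\ker\partial_2$ you aim for does not exist. Also, \cite[Corollary 2.3]{OpenAI26} plays no role in this step.

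None of this is needed, however. The \Dtwo{} condition only asks that $\partial_3$ be split injective, i.e.\ that $\sigma$ belong to a basis of $\Z G^2$; $\ker\partial_2$ may be strictly larger than $\Z G\sigma$. Your decomposition $e=\lambda w+\sigma$ already does the job once you take the specific lift from the proof of Theorem~\ref{thm:gap}, namely $w=e_1-\Delta(1+\ell N_b)e_2$, whose image under $\partial_2$ has first coordinate $v$. With $\partial_2e_2=\lambda\,\partial_2w$ and $\sigma=e_2-\lambda w$, the pair $(w,\sigma)$ comes from $(e_1,e_2)$ by two elementary basis changes. Hence $1\mapsto\sigma$ is split injective, $X$ is \Dtwo{}, and $\chi(X)=0$. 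What makes $(w,\sigma)$ a basis is the triangular form $w=e_1-\mu e_2$, not freeness of $\im\partial_2$. Indeed $\ker\partial_2=\Z G\sigma\oplus\mathrm{Ann}(v)\,w$, so $\pi_2(X)\cong\mathrm{Ann}(v)\neq0$.

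\emph{Second, the lower bound.} Restricting to $K$ gives only $\chi(Z)\ge0$. (As above, $\pi_2(Z)$ is projective over $\Z K$ but not over $\Z G$.) For the decisive step $\chi(Z)\neq0$ you offer only a guess at what the citation provides, with no mechanism linking it to $\chi(Z)$.

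The missing idea is twisted homology with a character that is nontrivial on the torsion element. Let $\C_\rho$ be $\C$ with $x$ acting by a primitive $pq$-th root of unity $\zeta$ and $t$ acting trivially. Applying $\C_\rho\otimes_{\Z G}-$ to the Fox matrix gives $\partial_1^\rho=(\zeta-1,0)^{T}$ and $\partial_2^\rho=\mathrm{diag}\bigl(0,(\zeta^p-1)(1-\zeta^q)\bigr)$, because both $N_x$ and $u$ map to $0$. So $H_0(Y;\C_\rho)=H_1(Y;\C_\rho)=0$. These two groups depend only on $\pi_1$, so they also vanish for $Z$, and therefore $\chi(Z)=\dim_{\C}H_2(Z;\C_\rho)$. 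The cited \cite[Corollary 2.3]{OpenAI26}, applied to $x$ (which has finite order and satisfies $\rho(x)\neq1$), gives $H_2(Z;\C_\rho)\neq0$. Hence $\chi(Z)\ge1$.
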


Our example is optimal in the sense that it has six cells, which is the minimum possible number for a counterexample to Wall's D2 problem. The Euler characteristic bound gives $\deficiency(G) = 0$ which strengthens, by less elementary means, the earlier result that $G$ is not a one-relator group.

\begin{proof}
We retain the notation from the proof of Theorem \ref{thm:gap}. Let $e_1,e_2$ be the cellular basis of $C_2(\widetilde Y)$ and put $e'_1=e_1 - \Delta (1+\ell N_b)e_2$. 
As in the proof of \cref{thm:gap}, $\partial_2e'_1$ generates $\im\partial_2$. Choose $\lambda\in\Z G$ with $\partial_2e_2=\lambda\partial_2e'_1$, and put $e'_2=e_2-\lambda e'_1$. Then $e'_2$ is a cycle and $(e'_1,e'_2)$ is a basis, obtained from $(e_1,e_2)$ by two elementary changes of basis. We have $\ker(\partial_2) \cong H_2(\widetilde Y) \cong \pi_2(Y)$, so there is a map from the 2-sphere to $\widetilde Y$ representing the homology class of $e'_2$. We compose this map with the covering 
projection $\widetilde Y\to Y$ and use it to attach a $3$-cell to $Y$; call the resulting 3-complex $X$. By construction, 
$X$ is a D2 complex, with $\pi_1(X)=G$ and $\chi(X)=1-2+2-1=0$.

To bound the Euler characteristic of finite $2$-complexes 
with fundamental group $G$, we consider the action of $G$ on $\mathbb{C}$ where $x\in G$ acts as multiplication by a 
primitive $pq$-th root of unity $\zeta$  and $t$ acts trivially. Let $\C_\rho$ be the corresponding right $\Z G$-module. We first observe that $H_0(Y;\C_\rho)=H_1(Y;\C_\rho)=0$. This follows by applying $\C_\rho \otimes_{\Z G} -$ to $C_*(\widetilde Y)$, using \eqref{eq:fox} to obtain
\[
\partial_2^\rho=\begin{pmatrix}0&0\\0&(\zeta^p - 1)(1 - \zeta^q)\end{pmatrix},
\qquad \partial_1^\rho=\begin{pmatrix}\zeta - 1\\0\end{pmatrix}.
\]

Let $Z$ be a finite connected $2$-complex with $\pi_1(Z) = G$. Since $Y$ and $Z$ can each serve as the 2-skeleton of a $K(G,1)$, we have $H_0(Z;\C_\rho)=H_1(Z;\C_\rho)=0$. Since $x \in G$ has finite order and $\rho(x) \ne 1$, from \cite[Corollary 2.3]{OpenAI26} we have $H_2(Z;\C_\rho) \ne 0$. The Euler characteristic can be
calculated from twisted homology (see \cite[Proposition 2.5.4(ii)]{Di04} for a precise statement). In our setting this yields $\chi(Z) = \sum_{i=0}^2 (-1)^i \dim_{\C}H_i(Z;\C_\rho) = \dim_{\C}H_2(Z;\C_\rho) \ge 1$.
\end{proof}

\subsection*{Acknowledgements}
LB is partly supported by the SNSF grant TMAG-2\_216487/1.
JN is supported by the Max Planck Institute for Mathematics. 

\subsection*{Declaration of AI usage}  ChatGPT 6 Astra was used to assist with the discovery of the groups in Theorem A and the proof of that result, as well as the preparation of an initial draft manuscript. Claude Fable 5.1 was consulted in subsequent editing. The arguments and text have been thoroughly checked and substantially revised by the authors, who are responsible for the final manuscript.  

\bibliographystyle{amsalpha}
\bibliography{biblio}

\providecommand{\bysame}{\leavevmode\hbox to3em{\hrulefill}\thinspace}
\providecommand{\MR}{\relax\ifhmode\unskip\space\fi MR }
% \MRhref is called by the amsart/book/proc definition of \MR.
\providecommand{\MRhref}[2]{%
  \href{http://www.ams.org/mathscinet-getitem?mr=#1}{#2}
}
\providecommand{\href}[2]{#2}
\begin{thebibliography}{{Ope}26}

\bibitem[BMS99]{BMS99}
Gilbert Baumslag, Alexei~G. Myasnikov, and Vladimir Shpilrain, \emph{Open problems in combinatorial group theory}, Groups, languages and geometry ({S}outh {H}adley, {MA}, 1998), Contemp. Math., vol. 250, Amer. Math. Soc., Providence, RI, 1999, pp.~1--27.

\bibitem[Dim04]{Di04}
Alexandru Dimca, \emph{Sheaves in topology}, Universitext, Springer-Verlag, Berlin, 2004.

\bibitem[Fox53]{Fox53}
Ralph~H. Fox, \emph{Free differential calculus. {I}. {Derivation} in the free group ring}, Ann. of Math. (2) \textbf{57} (1953), no.~3, 547--560.

\bibitem[Gru76]{Gruenberg76}
Karl~W. Gruenberg, \emph{Relation modules of finite groups}, CBMS Regional Conference Series in Mathematics, vol.~25, American Mathematical Society, Providence, RI, 1976.

\bibitem[Har00]{Ha00}
Jens Harlander, \emph{Some aspects of efficiency}, Groups---{K}orea '98 ({P}usan), de Gruyter, Berlin, 2000, pp.~165--180.

\bibitem[Har18]{Harlander18}
\bysame, \emph{A survey of recent progress on some problems in 2-dimensional topology}, Advances in two-dimensional homotopy and combinatorial group theory (W.~Metzler and S.~Rosebrock, eds.), London Mathematical Society Lecture Note Series, vol. 446, Cambridge University Press, Cambridge, 2018, pp.~1--26.

\bibitem[Lin26]{Li26}
Marco Linton, \emph{Residually rationally solvable one-relator groups}, Bull. Lond. Math. Soc. \textbf{58} (2026), no.~1, Paper No. e70210, 12.

\bibitem[New73]{Newman73}
B.~B. Newman, \emph{The soluble subgroups of a one-relator group with torsion}, J. Austral. Math. Soc. \textbf{16} (1973), 278--285.

\bibitem[{Ope}26]{OpenAI26}
{OpenAI}, \emph{A counterexample to {Wall}'s {$D(2)$} problem}, \href{https://github.com/openai/math/blob/main/preprints/A-Counterexample-to-Walls-D2-Problem-October-6-2026/wall-d2-counterexample.pdf}{OpenAI Math Release preprint}, 2026, 6 October 2026.

\bibitem[Wal65]{Wall65}
C.~T.~C. Wall, \emph{Finiteness conditions for {CW}-complexes}, Ann. of Math. (2) \textbf{81} (1965), no.~1, 56--69.

\bibitem[Wal79]{Wa79}
\bysame, \emph{List of problems}, Homological group theory ({P}roc. {S}ympos., {D}urham, 1977), London Math. Soc. Lecture Note Ser., vol.~36, Cambridge Univ. Press, Cambridge-New York, 1979, pp.~369--394.

\end{thebibliography}
\end{document}